\DeclareMathSymbol{\twoheadrightarrow} {\mathrel}{AMSa}{"10}
\def\Q{{\mathbf Q}}
\def\Z{{\mathbf Z}}
\def\F{{\mathbf F}}
                               \def\ST{{\mathbf S}}
\def\Gal{\mathrm{Gal}}
\def\End{\mathrm{End}}
\def\Aut{\mathrm{Aut}}
\def\Hom{\mathrm{Hom}}
\def\fchar{\mathrm{char}}
\def\GL{\mathrm{GL}}
\def\ZZ{\mathcal{Z}}
\def\dim{\mathrm{dim}}
\newtheorem{thm}{Theorem}[section]
\newtheorem{lem}[thm]{Lemma}
\newtheorem{cor}[thm]{Corollary}
\theoremstyle{definition}
\newtheorem{ex}[thm]{Example}
           \newtheorem{rem}[thm]{Remark}
\title[Homomorphisms of abelian varieties over function fields]
{Homomorphisms of abelian varieties over geometric fields of finite
characteristic}
\author[Yuri G. Zarhin]{Yuri G. Zarhin}
\address{Department of Mathematics, Pennsylvania State University,
University Park, PA 16802, USA}
 \email{zarhin\char`\@math.psu.edu}
\begin{document}
\begin{abstract}
We study  analogues of Tate's conjecture on homomorphisms for
abelian varieties  when the ground field is finitely generated over
an algebraic closure of a finite field. Our  results cover the case
of abelian varieties without nontrivial endomorphisms.
\end{abstract}

\maketitle
\section{Introduction}
Let $K$ be a field, $\bar{K}$ its algebraic closure, $\bar{K}_s
\subset \bar{K}$ the separable algebraic closure of $K$,
$\Gal(K)=\Gal(\bar{K}_s/K)=\Aut(\bar{K}_s/K)$ the absolute Galois
group of $K$. Let $X$ be an abelian variety over $K$. Then we write
$\End_K(X)$ for its ring of $K$-endomorphisms and put
$\End_K^0(X):=\End_K(X)\otimes\Q$. We write $\End(X)$ for the ring
$\End_{\bar{K}}(X)$ of $\bar{K}$-endomorphisms of $X$ and write
$\End^0(X)$ for the corresponding finite-dimensional semisimple
$\Q$-algebra $\End(X)\otimes\Q$. If $Y$ is an abelian variety over
$K$ then we write $\Hom_K(X,Y)$ for the (free) commutative group of
$K$-homomorphisms from $X$ to $Y$.

If $n$ is a positive integer that is not divisible by $\fchar(K)$
then we write $X_n$ for the kernel of multiplication by $n$ in
$X(\bar{K})$; it is well known that $X_n$ is free $\Z/n\Z$-module of
rank $2\dim(X)$ \cite{Mumford}, which is a Galois submodule of
$X(\bar{K}_s)$. We write $\bar{\rho}_{n,X}$ for the corresponding
(continuous) structure homomorphism
$$\bar{\rho}_{n,X}: \Gal(K)\to \Aut_{\Z/n\Z}(X_n) \cong \GL(2\dim(X),\Z/n\Z).$$
In particular, if $n=\ell$ is a prime then $X_{\ell}$ is a
$2\dim(X)$-dimensional $\F_{\ell}$-vector space provided with
$$\bar{\rho}_{\ell,X}:\Gal(K)\to \Aut_{\F_{\ell}}(X_{\ell})\cong
\GL(2\dim(X),\F_{\ell}).$$

If $\ell$ is a prime that is different from $\fchar(K)$ then we
write $T_{\ell}(X)$ for the $\Z_{\ell}$-Tate module of $X$ and
$V_{\ell}(X)$ for the corresponding $\Q_{\ell}$-vector space
$$V_{\ell}(X)=T_{\ell}(X)\otimes_{\Z_{\ell}}\Q_{\ell}$$
provided with the natural continuous Galois action \cite{Serre}
$$\rho_{\ell,X}:\Gal(K) \to \Aut_{\Z_{\ell}}(T_{\ell}(X))\subset
\Aut_{\Q_{\ell}}(V_{\ell}(X)).$$ Recall \cite{Mumford} that
$T_{\ell}(X)$ is a free $\Z_{\ell}$-module of rank $2\dim(X)$ and
$V_{\ell}(X)$ is a $\Q_{\ell}$-vector space of dimension $2\dim(X)$.
Notice that there are canonical isomorphisms of $\Gal(K)$-modules
$$X_{\ell}=T_{\ell}(X)/\ell T_{\ell}(X) \eqno(0).$$

 There are natural algebra
injections
$$\End_K(X)\otimes \Z/n \hookrightarrow \End_{\Gal(K)}(X_n) \eqno(1),$$
$$\End_K(X)\otimes \Z_{\ell} \hookrightarrow \End_{\Gal(K)}(T_{\ell}(X))\eqno(2),$$
$$\End_K(X)\otimes \Q_{\ell} \hookrightarrow \End_{\Gal(K)}(V_{\ell}(X))\eqno(3).$$
It is known \cite[Sect. 1]{Tate} that for given $\ell,K,X,Y$ the map
in (2) is bijective if and only if the map in (3) is bijective.

The Tate conjecture on homomorphisms of abelian varieties
\cite{Tate} asserts that if $K$ is finitely generated over its prime
subfield then the last two injections are bijective.  This
conjecture was proven by J. Tate himself over finite fields
\cite{Tate},  the author \ when $\fchar(K)>2$
\cite{ZarhinIz,ZarhinMZ1}, G. Faltings when $\fchar(K)=0$
\cite{Faltings1,Faltings2} and by S. Mori when $\fchar(K)=2$
\cite{MB}. They also proved (in the corresponding characteristics)
that the Galois module $V_{\ell}(X)$ is semisimple. (In the case of
finite fields the semisimplicity result is due to A. Weil. See also
\cite{ZarhinP}.) Let us state explicitly the following two well
known  corollaries of the Tate conjecture. (Here we assume that $K$
is finitely generated over its prime subfield.)

\begin{itemize}
\item[(i)]
{\bf The  isogeny theorem}. If for some $\ell \ne \fchar(K)$ the
$\Gal(K)$-modules $V_{\ell}(X)$ and $V_{\ell}(Y)$ are isomorphic
then $X$ and $Y$ are isogenous over $K$. (See \cite[Sect. 3, Th.
1(b) and its proof]{Tate} and \cite[Proof of Cor. 1.3 on p.
118]{Schappacher}.)
\item[(ii)]
If $\End_K(X)=\Z$ then the $\Gal(K)$-module $V_{\ell}(X)$ is
absolutely simple.
\end{itemize}

In addition,  if $K$ is finitely generated over its prime subfield
and $\fchar(K)\ne 2$ then for all but finitely many primes $\ell$
the $\Gal(K)$-module $X_{\ell}$ is semisimple and the injection
$$\Hom_K(X,Y)\otimes \Z/\ell \hookrightarrow \Hom_{\Gal(K)}(X_{\ell},Y_{\ell})$$
in (1) is bijective (\cite[Th. 1.1]{ZarhinMZ2},\cite[Cor. 5.4.3 and
Cor. 5.4.5]{ZarhinIn}, \cite[Prop. 3.4]{SZ},
\cite[Th.4.4]{ZarhinMatSb}).  (See \cite[Cor. 10.1]{ZarhinG} for a
discussion of the case of finite fields.) It follows immediately
that if $\End_K(X)=\Z$ then for all but finitely many primes $\ell$
the Galois module $X_{\ell}$ is absolutely simple. We discuss an
analogue of the isogeny theorem with ``finite coefficients" in
Section \ref{isogenyF}.

Let $p$ be a prime, $\F$ a finite field of characteristic $p$ and
$\bar{\F}$  an algebraic closure of $\F$.
 The aim of this note is to discuss the situation when the ground field $L$ is a
 field of characteristic $p$ that (strictly) contains $\bar{\F}$ and is finitely
 generated over it. We call such a field a {\sl geometric field} of
 characteristic $p$. Geometric fields are precisely the fields of
 rational functions of irreducible algebraic varieties (of positive dimension) over
 $\bar{\F}$.

 Our main results are the following four theorems.

 \begin{thm}
 \label{main}
 Let $p>2$ be a prime, $L$ a {\sl geometric field} of
 characteristic $p$ and $X$  an abelian variety  of positive dimension over $L$.
 Suppose that $\End_{L}(X)=\Z$. Then:

 \begin{itemize}
 \item[(i)]
For all primes $\ell \ne \fchar(L)$ the Galois module $V_{\ell}(X)$
is absolutely simple.
\item[(ii)]
For all but finitely many primes $\ell$ the Galois module $X_{\ell}$
is absolutely simple.
 \end{itemize}
 \end{thm}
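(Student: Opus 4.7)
The plan is to descend $X$ to a model over a finitely generated subfield of $L$, apply Tate's conjecture there, and then transport absolute simplicity from $\Gal(L_0)$ down to the closed normal subgroup $\Gal(L)$ via Clifford theory. Since $L$ is a geometric field, I can choose a finite subfield $\F_0\subset\bar\F$ and a subfield $L_0\subset L$ finitely generated over $\F_0$ with $L=L_0\cdot\bar\F$, and (by a standard spreading-out argument) I may assume that $X$ descends to an abelian variety $X_0/L_0$. The key observation is that, for every intermediate field $L_0\subseteq K\subseteq L$ that is finitely generated over $\F_p$, the inclusion $\End_K(X_0)\hookrightarrow\End_L(X)=\Z$ combined with $\Z\subseteq\End_K(X_0)$ forces $\End_K(X_0)=\Z$. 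Tate's conjecture in characteristic $p>2$ then yields absolute simplicity of $V_\ell(X_0)$ as a $\Gal(K)$-module (via $\End_{\Gal(K)}(V_\ell(X_0))=\Q_\ell$ plus the Weil--Zarhin semisimplicity theorem), and the mod-$\ell$ refinement cited in the introduction gives absolute simplicity of $X_\ell$ as a $\Gal(K)$-module for every $\ell$ outside a finite exceptional set $S_K$.

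The extension $L/L_0$ is Galois with group $\Gal(\bar\F/\F_0)\cong\hat\Z$, so $\Gal(L)$ is a closed normal subgroup of $\Gal(L_0)$ with pro-cyclic quotient. Set $\bar V:=V_\ell(X)\otimes_{\Q_\ell}\bar\Q_\ell$, absolutely irreducible as a $\Gal(L_0)$-module by the previous paragraph. Clifford's theorem then decomposes
\[
\bar V\big|_{\Gal(L)}\;\cong\;\bigl(W_1\oplus\cdots\oplus W_r\bigr)^{m},
\]
where the $W_i$ are pairwise non-isomorphic absolutely irreducible $\Gal(L)$-representations transitively permuted by $\hat\Z$, all of common multiplicity $m$. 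The goal reduces to showing $r=m=1$.

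For $r=1$: if $r>1$, the stabilizer of $W_1$ in $\hat\Z$ is the unique open subgroup of index $r$, which corresponds to the unique degree-$r$ extension $L_0^{(r)}\subset L$ of $L_0$. The $W_1$-isotypic component of $\bar V$ is then a proper nonzero $\Gal(L_0^{(r)})$-invariant subspace, contradicting the absolute simplicity of $V_\ell(X_0)$ as a $\Gal(L_0^{(r)})$-module established in the first paragraph. For $m=1$: pick a topological generator $\phi$ of $\hat\Z$ and any lift $\tilde\phi\in\Gal(L_0)$. Since $r=1$, comparing the two $\Gal(L)$-module structures on $\bar V$ (original and $\tilde\phi$-conjugate) via Schur shows that $\tilde\phi$ acts on $\bar V\cong W_1\otimes_{\bar\Q_\ell}M$ (with $\dim M=m$) as $A\otimes B$ for some $A\in\GL(W_1)$ and $B\in\GL(M)$. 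Since $\Gal(L_0)$ is topologically generated by $\Gal(L)$ and $\tilde\phi$, the $\Gal(L_0)$-invariant subspaces of $\bar V$ are precisely those of the form $W_1\otimes M'$ with $B(M')=M'$. Absolute simplicity of $\bar V$ as a $\Gal(L_0)$-module thus forces $B$ to admit no proper invariant subspace on $M$; since $B$ is a single linear operator on a finite-dimensional $\bar\Q_\ell$-vector space and $\bar\Q_\ell$ is algebraically closed, this forces $\dim M=m=1$.

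Part (ii) will follow by the same argument with $\bar V$ replaced by $X_\ell\otimes_{\F_\ell}\bar\F_\ell$ and $\bar\Q_\ell$ by $\bar\F_\ell$ throughout. The point requiring the most care is uniformity of the exceptional set in $\ell$: the constraint $r\mid 2\dim X$ bounds the possible stabilizer extensions $L_0^{(r)}\subset L$ to those indexed by the divisors $r$ of $2\dim X$, with only one such extension of each given degree inside $L$; the union of $S_{L_0}$ with the finitely many sets $S_{L_0^{(r)}}$ ($1<r\le 2\dim X$) is therefore still finite. The only steps I foresee demanding genuine care are the descent $X\rightsquigarrow X_0$ and the verification that Clifford's theorem applies in the profinite-continuous setting, but both reduce to standard facts once one notes that the $\Gal(L_0)$-representations involved have finite-dimensional continuous image in matrix groups.
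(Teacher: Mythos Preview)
Your proof is correct and follows essentially the same strategy as the paper: descend $X$ to a subfield $K$ finitely generated over $\F_p$ with $L=K\bar\F$, invoke the Tate conjecture (and its mod-$\ell$ refinement) over the finitely many intermediate extensions of $K$ inside $L$ of degree dividing $2\dim X$, and then use Clifford theory for the procyclic quotient $\Gal(L/K)$ to push absolute simplicity down to $\Gal(L)$. The only notable difference is in the multiplicity step: the paper packages the Clifford argument as a lemma over $k$ itself (its Lemma~\ref{absimple}) and uses Skolem--Noether to force $\End_H(V)=k$, whereas you pass to $\bar k$ and kill the multiplicity space via an eigenvector of the lifted Frobenius on $M$---an equivalent and slightly more elementary route.
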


\begin{rem}
In the case of  $\End(X)=\Z$ the assertion (i) of Theorem \ref{main}
follows from \cite[Cor. 1.4]{ZarhinMZ3}.
\end{rem}

\begin{rem}
Theorem \ref{main} gives a positive answer to a question of W. Gajda
that was asked in connection with \cite{Gajda}.

\end{rem}

\begin{thm}
 \label{isogeny}
Let $p>2$ be a prime, $L$ a {\sl geometric field} of
 characteristic $p$ and $X$ and $Y$ are abelian varieties  of positive dimension over $L$.
 Suppose that $\End_{L}(X)=\Z$ and one of the following two
 conditions holds:

 \begin{itemize}
\item[(i)]
There exists a prime $\ell$ such that the $\Gal(L)$-modules
$V_{\ell}(X)$ and $V_{\ell}(Y)$ are isomorphic.
\item[(ii)] The $\Gal(L)$-modules $X_{\ell}$ and $Y_{\ell}$ are isomorphic for infinitely many
primes $\ell$.
 \end{itemize}
 Then $X$ and $Y$ are isogenous over $L$.
 \end{thm}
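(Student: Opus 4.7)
The plan is to descend $X$, $Y$, and the Galois-module data to a finitely generated subfield of $L$ over $\F_p$, where the classical Tate conjecture applies, and to absorb the extra ``Frobenius direction'' by twisting by a character of bounded order. First, I would choose $K_0 \subset L$ finitely generated over $\F_p$ so that $X = X_0 \otimes_{K_0} L$ and $Y = Y_0 \otimes_{K_0} L$ for abelian varieties $X_0, Y_0$ over $K_0$, and so that $L = K_0 \cdot \bar{\F}$ inside a common algebraic closure. The injection $\End_{K_0}(X_0) \hookrightarrow \End_L(X) = \Z$ forces $\End_{K_0}(X_0) = \Z$. Setting $\F_q := K_0 \cap \bar{\F}$, the compositum description yields the short exact sequence
$$1 \to \Gal(L) \to \Gal(K_0) \to \Gal(\bar{\F}/\F_q) \to 1,$$
so $\Gal(K_0)/\Gal(L) \cong \hat{\Z}$, topologically generated by a Frobenius.

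For case (i), Theorem \ref{main}(i) gives that $V_{\ell}(X_0)$ is $\Gal(L)$-absolutely simple, hence also $\Gal(K_0)$-absolutely simple. The given $\Gal(L)$-isomorphism $V_{\ell}(X_0) \cong V_{\ell}(Y_0)$ spans, by Schur's lemma, the one-dimensional $\Q_{\ell}$-space $\Hom_{\Gal(L)}(V_{\ell}(X_0), V_{\ell}(Y_0))$, on which $\hat{\Z} = \Gal(K_0)/\Gal(L)$ acts through a continuous character $\chi$; equivalently, $V_{\ell}(Y_0) \cong V_{\ell}(X_0) \otimes \chi$ as $\Gal(K_0)$-modules. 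Comparing determinants and using the Weil-pairing identification $\det V_{\ell}(X_0) \cong \det V_{\ell}(Y_0) \cong \Q_{\ell}(g)$ (with $g = \dim X$) gives $\chi^{2g} = 1$. Thus $\chi$ is trivial on $\Gal(K_1)$ for $K_1 := K_0 \cdot \F_{q^{2g}}$, a finite subextension of $L/K_0$, so $V_{\ell}(X_0) \cong V_{\ell}(Y_0)$ as $\Gal(K_1)$-modules. Since $K_1$ is finitely generated over $\F_p$ with $p > 2$, the classical isogeny theorem produces an isogeny $X_0 \to Y_0$ over $K_1$; base change to $L$ concludes case (i).

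Case (ii) should go through in parallel, with $X_{\ell}$ and $\F_{\ell}$ replacing $V_{\ell}$ and $\Q_{\ell}$. I would fix a prime $\ell$ lying simultaneously in the infinite set $\{\ell : X_{\ell} \cong Y_{\ell} \text{ as } \Gal(L)\text{-modules}\}$, the cofinite set where $X_{\ell}$ is $\Gal(L)$-absolutely simple (Theorem \ref{main}(ii)), and the cofinite set where the mod-$\ell$ Tate conjecture holds over the field $K_1 = K_0 \cdot \F_{q^{2g}}$. The analogous twist argument produces a character $\bar{\chi} : \hat{\Z} \to \F_{\ell}^{\times}$ of order dividing $2g$, which becomes trivial on $\Gal(K_1)$, yielding $X_{0,\ell} \cong Y_{0,\ell}$ as $\Gal(K_1)$-modules. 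The mod-$\ell$ Tate conjecture over $K_1$ then forces $\Hom_{K_1}(X_0, Y_0) \neq 0$, and any nonzero element is an isogeny because $X_0$ is $K_1$-simple ($\End_{K_1}(X_0) \subset \End_L(X) = \Z$) and $\dim X_0 = \dim Y_0$ (from the matching Tate-module dimensions). The most delicate step throughout is the descent/twist: namely, that $\hat{\Z} = \Gal(K_0)/\Gal(L)$ acts on the relevant one-dimensional Hom-space through a character of order dividing $2g$, which rests on the Weil-pairing identification of the top exterior power of the Tate module.
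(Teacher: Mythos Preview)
Your argument is correct and follows essentially the same route as the paper: descend to a finitely generated subfield $K_0$ (the paper's $K$), use absolute simplicity over $\Gal(L)$ to see that $\Hom_{\Gal(L)}$ is one-dimensional and hence $V_\ell(Y)\cong V_\ell(X)(\chi)$ (resp.\ $Y_\ell\cong X_\ell(\bar\chi)$) for a character of $\Gamma$, bound its order by $2g$ via the Weil-pairing determinant identification, and pass to the degree-$2g$ extension $K_1=K_{2d}$ where the classical results apply.

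The only place you diverge from the paper is the endgame of case~(ii): the paper retains \emph{infinitely many} primes $\ell$ with $X_\ell\cong Y_\ell$ over $\Gal(K_{2d})$ and then invokes Theorem~\ref{isogfinite}, whereas you select a \emph{single} $\ell$ in the triple intersection (hypothesis, Theorem~\ref{main}(ii), mod-$\ell$ Tate over $K_1$) and conclude via $\Hom_{K_1}(X_0,Y_0)\otimes\Z/\ell\ne 0$ together with the simplicity of $X_0$ over $K_1$. Both finishes are valid and use the same ingredients; yours bypasses Theorem~\ref{isogfinite} at the cost of appealing to simplicity, while the paper's packages the mod-$\ell$ step into that separate theorem.
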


\begin{rem}
There are plenty of explicit examples in characteristic $p>2$ of
abelian varieties $X$ with $\End(X)=\Z$
\cite{ZarhinMRL2,ZarhinBSMF}.
\end{rem}

\begin{thm}
 \label{center}
 Let $p>2$ be a prime, $L$ a {\sl geometric field} of
 characteristic $p$ and $X$  an abelian variety  of positive dimension over
 $L$. Let $\ZZ$ be the center of $\End_L(X)$.
 Then:

 \begin{itemize}
 \item[(i)]
For all primes $\ell \ne \fchar(L)$ the center $\ZZ_{\ell,X}$ of
$\End_{\Gal(L)}(V_{\ell}(X))$ lies in
$$\ZZ\otimes\Q_{\ell}\subset\End_L(X)\otimes\Q_{\ell}.$$
\item[(ii)]
For all but finitely many primes $\ell$ the center
$\bar{\ZZ}_{\ell,X}$ of $\End_{\Gal(L)}(X_{\ell})$ lies in
$$\ZZ/\ell\subset\End_L(X)\otimes\Z/\ell.$$
 \end{itemize}
 \end{thm}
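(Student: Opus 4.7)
The plan is to spread $X$ out from $L$ to an abelian variety $X_0$ over a subfield $K_0 \subset L$ that is finitely generated over $\F_p$, arranged so that $L = K_0 \cdot \bar\F$ and (after enlarging $K_0$ if necessary) $\End_{K_0}(X_0) = \End_L(X)$. Then $\Gal(L)$ is a closed normal subgroup of $\Gal(K_0)$, with procyclic quotient $G := \Gal(K_0)/\Gal(L) \cong \Gal(\bar\F/\F_{p^m})$, where $\F_{p^m}$ is the constant field of $K_0$. The Tate conjecture, known for $K_0$ finitely generated over $\F_p$ in characteristic $p>2$, together with the semisimplicity of $V_\ell(X_0)$, yields $A := \End_L(X) \otimes \Q_\ell = \End_{\Gal(K_0)}(V_\ell(X_0))$. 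By Clifford's theorem applied to the normal inclusion $\Gal(L) \trianglelefteq \Gal(K_0)$, $V_\ell(X)$ remains semisimple as a $\Gal(L)$-module, so $B := \End_{\Gal(L)}(V_\ell(X))$ is a semisimple $\Q_\ell$-algebra containing $A$.

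The group $\Gal(K_0)$ acts on $B$ by inner conjugation inside $\End_{\Q_\ell}(V_\ell(X))$; this action factors through $G$ (since $\Gal(L)$ centralizes $B$) and has fixed subalgebra $B^G = A$. The key reduction is to show that $G$ acts trivially on $Z(B)$: granted this, $Z(B) \subset B^G = A$, and since $Z(B)$ commutes with all of $B \supset A$, one concludes $Z(B) \subset Z(A) = \ZZ \otimes \Q_\ell$, proving part (i). To analyze how $G$ acts on $Z(B)$, I decompose $V_\ell(X) = \bigoplus_\alpha W_\alpha^{m_\alpha}$ into $\Gal(L)$-isotypic components, giving $B \cong \prod_\alpha \Mat_{m_\alpha}(D_\alpha)$ with $D_\alpha = \End_{\Gal(L)}(W_\alpha)$, and $Z(B) \cong \prod_\alpha Z(D_\alpha)$; the $G$-action then comes from a permutation of the iso classes $\{W_\alpha\}$ (via conjugation of the $\Gal(L)$-representation by lifts of elements of $G$) combined with field automorphisms of the $Z(D_\alpha)$.

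The main obstacle is verifying that this $G$-action on $Z(B)$ is trivial. The hypothesis $\End_{K_0}(X_0) = \End_L(X)$ forces $\Gal(K_0)$ to act trivially on $A$, hence on its center $\ZZ \otimes \Q_\ell$ and on the primitive central idempotents of $A$ that correspond to the $L$-simple factors $Y_i$ of $X$ up to isogeny. Writing $D_i := \End_L^0(Y_i)$ with center $\ZZ_i := Z(D_i)$, one exploits the inclusion $\ZZ_i \otimes \Q_\ell \hookrightarrow A \subset B$ and the fact that the finer decomposition of $Z(B)$ is governed, after applying Tate for $K_0$ to each $Y_{0,i}$, by the splitting behavior of primes of $\ZZ_i$ above $\ell$; the triviality of the $G$-action on $\ZZ_i$ then forces $G$ to fix each such prime and act trivially on the corresponding local completions, hence trivially on $Z(B)$. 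Part (ii) follows by a parallel argument modulo $\ell$: for all but finitely many primes $\ell$, the semisimplicity of $X_\ell$ as a $\Gal(K_0)$-module and the bijectivity of $\End_L(X) \otimes \Z/\ell \hookrightarrow \End_{\Gal(K_0)}(X_\ell)$ (cited in the introduction) replace the Tate input; Clifford's theorem yields $\Gal(L)$-semisimplicity of $X_\ell$, and the same $G$-invariance analysis applied to the finite $\Z/\ell$-algebra $\End_{\Gal(L)}(X_\ell)$ yields the inclusion $\bar\ZZ_{\ell,X} \subset \ZZ/\ell$.
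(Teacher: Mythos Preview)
Your setup is exactly right and matches the paper: spread out to $K_0$ finitely generated over $\F_p$ with $\End_{K_0}(X_0)=\End_L(X)$, set $G=\Gal(K_0)$, $H=\Gal(L)$, use Tate over $K_0$ together with semisimplicity to get $A:=\End_L(X)\otimes\Q_\ell=\End_G(V_\ell(X))$, and use Clifford's lemma to make $B:=\End_H(V_\ell(X))$ semisimple. The reduction to showing that the conjugation action of $G$ on $Z(B)$ is trivial (so that $Z(B)\subset B^{G}=A$, hence $Z(B)\subset Z(A)$) is also correct.

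The gap is in your justification of that key step. You assert that ``the finer decomposition of $Z(B)$ is governed \dots\ by the splitting behavior of primes of $\ZZ_i$ above $\ell$,'' and then that triviality of the $G$-action on each $\ZZ_i$ forces triviality on $Z(B)$. But Tate over $K_0$ tells you about $A$ and $Z(A)=\prod_i\ZZ_i\otimes\Q_\ell$, not about $Z(B)$; there is no reason the primitive idempotents of $Z(B)$ should be indexed by the primes of the $\ZZ_i$ over $\ell$, nor even that $Z(A)\subset Z(B)$. The paper's own counterexample already shows this fails: for an ordinary elliptic curve over $\bar\F$ viewed over $L$, one has $B=\End_{\Q_\ell}(V_\ell(X))=\M_2(\Q_\ell)$ with $Z(B)=\Q_\ell$, while $\ZZ$ is an imaginary quadratic field and for inert $\ell$ the algebra $\ZZ\otimes\Q_\ell$ is a quadratic field extension of $\Q_\ell$. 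So $Z(B)$ is not the product of local completions of the $\ZZ_i$, and your argument, as written, is circular: it presupposes exactly the control over $\End_H(V_\ell)$ that the theorem is meant to establish.

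The paper closes this gap differently (its Theorem~3.5). Since $B$ is semisimple of $\Q_\ell$-dimension at most $d^2$ with $d=2\dim(X)$, the commutative semisimple algebra $Z(B)$ has at most $d$ simple factors, each of degree at most $d$ over $\Q_\ell$; hence $\Aut_{\Q_\ell}(Z(B))$ has order dividing a constant $n$ depending only on $d$ (one can take $n=(d!\cdot d^d)!$). The procyclic quotient $G/H$ therefore acts on $Z(B)$ through $G/G_n$, so $G_n$ fixes $Z(B)$. Now apply Tate not over $K_0$ but over the intermediate field $K_n:=\bar K_s^{G_n}$, which is still finitely generated over $\F_p$ and still satisfies $\End_{K_n}(X)=\End_L(X)$: this gives $\End_{G_n}(V_\ell(X))=A$, whence $Z(B)\subset A$. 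The missing idea is precisely this passage to $K_n$.

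For part (ii) there is a second issue your sketch does not address: one needs the same $n$ to work for all $\ell$ simultaneously, so that excluding finitely many $\ell$ (those for which Tate mod $\ell$ over $K_n$ or semisimplicity of $X_\ell$ over $G_n$ fails) makes sense. The paper's bound on $\lvert\Aut(Z(B))\rvert$ depends only on $d=2\dim(X)$, not on $\ell$, which is exactly what makes the finite-level argument go through uniformly.
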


 \begin{rem}
\label{centerendo} Clearly, for all $\ell$ the commutative
$\Q_{\ell}$-algebra $\ZZ\otimes\Q_{\ell}$ coincides with the center
of $\End_L(X)\otimes\Q_{\ell}$. It is also clear that for all but
finitely many primes $\ell$ the commutative $\F_{\ell}$-algebra
$\ZZ/\ell$ coincides with the center of $\End_L(X)\otimes\Z/\ell\Z$.
Notice also that
$$\End_L(X)\otimes\Q_{\ell}\subset \End_{\Gal(L)}(V_{\ell}(X)), \ \End_L(X)\otimes\Z/\ell\Z\subset \End_{\Gal(L)}(X_{\ell}).$$
This implies that for all $\ell\ne \fchar(L)$
$$\ZZ_{\ell,X}\bigcap [\End_L(X)\otimes\Q_{\ell}]\subset
\ZZ\otimes\Q_{\ell}$$
and for all but finitely many $\ell$
$$\bar{\ZZ}_{\ell,X}\bigcap [\End_L(X)\otimes\Z/\ell]\subset
\ZZ/\ell.$$
 It follows that in order to prove Theorem
\ref{center}, it suffices to check that for all $\ell\ne \fchar(L)$
$$\ZZ_{\ell,X}\subset \End_L(X)\otimes\Q_{\ell}$$
and for all but finitely many $\ell$
$$\bar{\ZZ}_{\ell,X}\subset\End_L(X)\otimes\Z/\ell.$$

 \end{rem}

\begin{rem}
Compare Theorem \ref{center} with \cite[Cor. 4.2.8(ii)]{Deligne}.
\end{rem}

\begin{thm}
\label{ssL}

Let $p>2$ be a prime, $L$ a {\sl geometric field} of
 characteristic $p$ and $X$  an abelian variety  of positive dimension over
 $L$. Then:

\begin{itemize}
 \item[(i)]
For all primes $\ell \ne \fchar(L)$ the  $\Gal(L)$-module
$V_{\ell}(X)$ is semisimple.
\item[(ii)]
For all but finitely many primes $\ell$ the $\Gal(L)$-module
$X_{\ell}$ is semisimple.
 \end{itemize}
\end{thm}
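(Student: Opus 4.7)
The plan is to descend from $L$ to a subfield $K$ finitely generated over its prime subfield $\F_p$, apply the classical semisimplicity theorems available there, and then transfer the conclusion to $\Gal(L)$ via a Clifford-type restriction argument.

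First, I would write $L = \bar{\F}\cdot K$ with $K$ finitely generated over $\F_p$. Concretely, pick field generators $y_1,\ldots, y_n$ of $L$ over $\bar{\F}$ and take $K = \F_p(y_1,\ldots, y_n)$; enlarging $K$ by a finite extension of constants if necessary (which still leaves it finitely generated over $\F_p$), one may further arrange that $X$ descends to an abelian variety $X_K$ over $K$ with $X = X_K\times_K L$. Set $\F' := K \cap \bar{\F}$, a finite subfield of $\bar{\F}$. Any $K$-automorphism of an algebraic closure $\bar{L}$ of $L$ permutes the algebraic closure of $\F_p$ inside $\bar{L}$, which is precisely $\bar{\F}$; hence such an automorphism stabilizes $L = \bar{\F}\cdot K$, showing that $L/K$ is Galois with group $\Gal(\bar{\F}/\F')$ and that $\Gal(L)$ is a normal subgroup of $\Gal(K)$.

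Second, the Tate conjecture over finitely generated fields (Tate, Zarhin, Faltings, Mori, as recalled in the introduction) implies that the $\Gal(K)$-module $V_{\ell}(X_K)$ is semisimple for every prime $\ell \neq p$; and, using the hypothesis $p>2$, the $\Gal(K)$-module $(X_K)_{\ell}$ is semisimple for all but finitely many primes $\ell$. I then invoke the following Clifford-type fact: if a group $G$ acts on a finite-dimensional vector space $V$ in a semisimple manner and $H$ is normal in $G$, then $V|_H$ is also semisimple. (One reduces to the case where $V$ is simple as a $G$-module; any minimal nonzero $H$-submodule $W \subset V$ then yields $V = \sum_{g\in G} gW$, where each $gW$ is $H$-simple since $gHg^{-1} = H$.) Applying this with $G = \Gal(K)$, $H = \Gal(L)$, and $V = V_{\ell}(X)$ (resp.\ $V = X_{\ell}$), both parts of the theorem follow.

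The only real obstacle is the descent step: verifying that $L/K$ is a Galois extension so that Clifford's theorem applies cleanly. This is immediate once one notes that the algebraic closure of $\F_p$ in any algebraic closure of $L$ is canonically $\bar{\F}$, hence stable under every field automorphism; everything else is formal.
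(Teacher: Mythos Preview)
Your proof is correct and follows essentially the same approach as the paper: descend to a subfield $K$ finitely generated over $\F_p$ with $L=K\bar{\F}$, use that $\Gal(L)$ is normal in $\Gal(K)$, invoke the known semisimplicity results over $K$, and then apply a Clifford-type restriction argument to pass semisimplicity to the normal subgroup. The paper organizes the descent and normality setup at the start of Section~\ref{proofM} (shared by all the main theorems) and cites \cite[Lemma~3.4]{ZarhinMatSb} for the Clifford step, whereas you re-derive both inline; the content is the same.
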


\begin{ex}[Counterexample]
Let $K$ be a field of characteristic $p>2$ that is finitely
generated over a finite field $\F$ and $L=K\bar{\F}$.

 Suppose that $X$ is a {\sl non-supersingular} abelian variety of
positive dimension over $K$ that is actually defined  with all its
endomorphisms over  $\F$. (E.g., one may take as $X$ an {\sl
ordinary} elliptic curve over $\F$.) Then all the torsion points of
$X$ are defined over $\bar{\F}\subset L$. It follows that $\Gal(L)$
acts trivially on all  $X_{n}$ and $V_{\ell}(X)$. In particular,
$$\End_{\Gal)L)}(V_{\ell}(X))=\End_{\Q_{\ell}}(V_{\ell}(X))$$
has $\Q_{\ell}$-dimension $[2\dim(X)]^2$. However, the
$\Q$-dimension of $\End^0(X)$ is strictly less than $[2\dim(X)]^2$
\cite[Lemma 3.1]{ZarhinMRL1} and therefore the centralizer of
$\Gal(L)$ in $\End_{\Q_{\ell}}(V_{\ell}(X))$ is strictly bigger than
$$\End^0(X)\otimes_{\Q}\Q_{\ell}=\End(X)\otimes\Q_{\ell}=\End_L(X)\otimes\Q_{\ell}.$$
This implies that the analogue of the Tate conjecture does not hold
for such $X$ over $L$.
\end{ex}

The paper is organized as follows. In Section \ref{isogenyF} we
discuss a variant of the isogeny theorem with finite coefficients.
Section \ref{repT} contains auxiliary results from representation
theory of groups with procyclic quotients. We prove the main results
in Section \ref{proofM}.


\section{Isogeny theorem with finite coefficients}
\label{isogenyF}

\begin{thm}
\label{isogfinite} Let $K$ be a field that is finitely generated
over its prime subfield and $\fchar(K)\ne 2$. Let $X$ and $Y$ be
abelian varieties over $K$. Suppose that for infinitely many primes
$\ell$ the $\Gal(K)$-modules $X_{\ell}$ and $Y_{\ell}$ are
isomorphic. Then $X$ and $Y$ are isogenous over $K$.
\end{thm}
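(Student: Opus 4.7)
The plan is to exploit the bijectivity of
\[
\Hom_K(X,Y)\otimes \Z/\ell \hookrightarrow \Hom_{\Gal(K)}(X_{\ell},Y_{\ell})
\]
for almost all $\ell$ --- a result already quoted in the introduction for $K$ finitely generated over its prime subfield and $\fchar(K)\neq 2$ --- and to lift a Galois-equivariant isomorphism of $\ell$-torsion groups to an honest $K$-isogeny $X\to Y$. So unlike the $V_\ell$-version of the isogeny theorem, where one can work with $\Q_\ell$-coefficients directly, here the lifting step is the whole content.

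First I would fix the finite set $S$ of ``bad'' primes outside of which the displayed map is bijective and $X_\ell$ is semisimple; I enlarge $S$ to contain $\fchar(K)$. By hypothesis there are infinitely many primes $\ell$ admitting a Galois isomorphism $\psi\colon X_\ell\xrightarrow{\sim}Y_\ell$, and I would pick any such $\ell\notin S$. Surjectivity of the displayed map at this $\ell$ then yields a homomorphism $\phi\in\Hom_K(X,Y)$ whose image in $\Hom_{\Gal(K)}(X_\ell,Y_\ell)$ is exactly $\psi$; equivalently, $\phi$ restricts on $\ell$-torsion to the isomorphism $\psi$. In particular $\dim_{\F_\ell}X_\ell=\dim_{\F_\ell}Y_\ell$, so $\dim X=\dim Y$.

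It then remains to check that $\phi$ is actually an isogeny, not merely a nonzero element of $\Hom_K(X,Y)$. Since $\phi\vert_{X_\ell}=\psi$ is injective, we have $(\ker\phi)[\ell]=0$; as $\ell\neq\fchar(K)$, any positive-dimensional abelian subvariety over $K$ has nontrivial $\ell$-torsion, so the identity component $(\ker\phi)^0$ must be trivial and $\ker\phi$ is a finite group scheme. A dimension count then forces $\phi(X)=Y$, so $\phi$ is a $K$-isogeny.

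The main (and only real) obstacle is verifying that the lift $\phi$ is genuinely an isogeny, which I handle by the $\ell$-torsion argument above. Beyond that, the proof is essentially a bookkeeping exercise on top of the quoted almost-all-$\ell$ bijectivity statement; one does not need to invoke the semisimplicity of $X_\ell$ or the full strength of Tate's conjecture over $V_\ell$ for this particular statement.
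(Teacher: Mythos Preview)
Your proof is correct and follows essentially the same route as the paper: pick a good prime $\ell$, use the quoted almost-all-$\ell$ bijectivity of $\Hom_K(X,Y)\otimes\Z/\ell\to\Hom_{\Gal(K)}(X_\ell,Y_\ell)$ to lift a Galois isomorphism to some $\phi\in\Hom_K(X,Y)$, and then argue that $\phi$ has finite kernel (no $\ell$-torsion in the kernel) and is surjective by the equality of dimensions. The only cosmetic differences are that the paper records $\dim X=\dim Y$ before lifting and phrases surjectivity via ``$u(X)$ contains $Y_\ell$'' rather than a straight dimension count.
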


\begin{proof}
We may assume that $\dim(X)>0$ and $\dim(Y)>0$. Since for all primes
$\ell \ne \fchar(K)$
$$2\dim(X)=\dim_{\F_{\ell}}(X_{\ell}), \
2\dim(Y)=\dim_{\F_{\ell}}(Y_{\ell}),$$ we obtained that
$\dim(X)=\dim(Y)$. Since for all but finitely many primes $\ell$
$$\Hom_K(X,Y)\otimes \Z/\ell\Z=\Hom_{\Gal(K)}(X_{\ell},Y_{\ell}),$$
there exist a prime $\ell \ne \fchar(K)$ and a $K$-homomorphism $u:
X \to Y$ such that $u$ induces an isomorphism between $X_{\ell}$ and
$Y_{\ell}$. In particular, $\ker(u)$ does not contain points of
order $\ell$ on $X$ while the image $u(X)$ contains all points of
order $\ell$ on $Y$. This implies that $\ker(u)$ has dimension zero
while irreducible closed $u(Y)$ has dimension $\dim(Y)$. In other
words, $u: X\to Y$ is a surjective homomorphism with finite kernel,
i.e., is an isogeny.
\end{proof}

\begin{rem}
It would be interesting to get an analogue of Theorem
\ref{isogfinite}  where say, a number field $K$ is replaced by its
infinite $\ell$-cyclotomic extension $K(\mu_{\ell^{\infty}})$. Some
important  special cases of this analogue are done in \cite{HM}.

\end{rem}

\section{Representation theory}
\label{repT}

Throughout this Section, $G$ is a profinite group and $H$ is a
closed normal subgroup of $G$ such that the quotient $\Gamma=G/H$ is
a procyclic group. We call $G$ a {\sl procyclic extension} of $H$.

We write down the group operation in $G$ (and $H$) multiplicatively
and in $\Gamma$ additively. We write $\pi:G \to \Gamma$ for the
natural continuous surjective homomorphism from $G$ to $\Gamma$.  If
$n$ is a positive integer then $n\Gamma$ is the closed subgroup (as
the image of compact $\Gamma$ under $\Gamma \stackrel{n}{\to}
\Gamma$) in $\Gamma$, whose index divides $n$; since the index is
finite, $n\Gamma$ is open in $\Gamma$. Notice that $n\Gamma$ is also
a procyclic group.

Let us put $G_n=\pi^{-1}(n\Gamma)$; clearly, $G_n$ is an open normal
subgroup in $G$, whose index divides $n$. In addition,  each $G_n$
contains $H$ and the quotient $G/G_n$ is canonically isomorphic to
$\Gamma/n\Gamma$  while $G_n/H \cong n\Gamma$. In particular, $H$ is
a closed normal subgroup of $G_n$ and the quotient $G_n/H$ is
procyclic, i.e. $G_n$ is also a procyclic extension of $H$. In
particular, for each positive integer $m$ we may define the open
normal subgroup $(G_n)_m$ of $G_n$; clearly,
$$(G_n)_m=G_{mn},$$
because $m(n\Gamma)=(mn)\Gamma$.

\begin{rem}
\label{finiteorder} Let $c: G \to k^{*}$ be a continuous group
homomorphism (character) of $G$ with values in the multiplicative
group of a locally compact field $k$ that enjoys the following
properties:

\begin{itemize}
\item[(i)] $c$ kills $H$, i.e., $c$ factors through $G/H=\Gamma$.
\item[(i)] $c^n$ is the trivial character, i.e., $c^n$ kills the
whole $G$.
\end{itemize}
Then obviously $c$ kills $\pi^{-1}(n\Gamma)=G_n$, i.e., $c$ factors
through the finite cyclic quotient $G/G_n=\Gamma/n\Gamma$.
\end{rem}


Let $k$ be a locally compact field (e.g., $k$ is finite or
$\Q_{\ell}$.) Let $d$ be a positive number and $V$ a $d$-dimensional
$k$-vector space provided with  natural topology induced by the
topology on $k$. Let
$$\rho: G \to \Aut_k(V)\cong \GL(d,k)$$
be a continuous semisimple linear representation of $G$. As usual,
$\det(V)$ stands for the one-dimensional $G$-module
$\Lambda^d_{k}(V)$.

\begin{lem}
\label{absimple} Suppose that
$$\End_{G_d}(V)=k.$$
 Then the $H$-module $V$ is absolutely simple. In particular,
$$\End_H(V)=k.$$
\end{lem}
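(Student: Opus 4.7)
The plan is to reduce to an algebraically closed coefficient field and then apply Clifford's theorem, exploiting the procyclic structure of $G/H$ twice.

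Extend scalars: let $V' := V \otimes_k \bar k$ for an algebraic closure $\bar k$ of $k$. Since $V$ is a semisimple $G$-module and $\End_{G_d}(V)=k$, we get $\End_{\bar k[G_d]}(V') = \bar k$, so $V'$ is a simple (absolutely simple) $\bar k[G_d]$-module of dimension $d$, and $V'|_H$ is $H$-semisimple by Clifford. Its isotypic decomposition is
\[
V'|_H \;=\; \bigoplus_{i=1}^{m'} W_i^{n'},
\]
with absolutely simple, pairwise non-isomorphic $\bar k[H]$-modules $W_i$ permuted transitively by $G$, of common dimension $e'$ and common multiplicity $n'$, so that $d = m' n' e'$.

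The key step is $m'=1$. The stabilizer $G_1\supseteq H$ of $W_1$ has index $m'$ in $G$, and because $\Gamma=G/H$ is procyclic, the finite quotient $G/G_1$ is cyclic of order $m'$. Simplicity of $V'|_{G_d}$ forces $G_d$ to act transitively on $\{W_1,\ldots,W_{m'}\}$, i.e.\ the composite $G_d\hookrightarrow G\twoheadrightarrow G/G_1\cong \Z/m'\Z$ must be surjective. But this composite factors through $G_d/H = d\Gamma$, and the image of $d\Gamma$ in $\Z/m'\Z$ is the image of multiplication by $d$, which is zero because $m'\mid d$. Hence $m'=1$; this is exactly the mechanism of Remark \ref{finiteorder}.

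It remains to show $n'=1$. Setting $W := W_1$, the multiplicity space $U' := \Hom_H(W,V')$ has dimension $n'$, and $V'\cong W\otimes_{\bar k} U'$ as $H$-modules. The $G_d$-action on $V'$ produces a projective representation $G_d/H\to \PGL(U')$, whose preimage $\tilde\Pi$ in $\GL(U')$ is topologically generated by $\bar k^*$ together with any lift of a topological generator of the procyclic group $G_d/H$, hence is abelian. Over the algebraically closed field $\bar k$, such a lift has an eigenvector in $U'$, producing a one-dimensional $\tilde\Pi$-invariant subspace. Since $G_d$-submodules of $V'$ correspond to $\tilde\Pi$-submodules of $U'$, simplicity of $V'|_{G_d}$ forces $U'$ to be simple under $\tilde\Pi$, so $n'=1$. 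Therefore $V'|_H=W$ is absolutely simple, $\End_{\bar k[H]}(V') = \bar k$, and so $\End_H(V)=k$, as required. The main obstacle is the $m'=1$ step: one must observe that the interplay between $m'\mid d$ and the specific choice $G_d = \pi^{-1}(d\Gamma)$ is precisely the mechanism of Remark \ref{finiteorder}. The rest is standard representation theory of procyclic groups.
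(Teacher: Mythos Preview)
Your route differs from the paper's: you base-change to $\bar k$ first, which folds the paper's two separate steps (showing the number $r$ of isotypic components over $k$ is $1$, and then that the center $k'$ of $D=\End_H(V)$ equals $k$) into your single $m'=1$ argument. For the final step the paper stays over $k$ and uses Skolem--Noether: conjugation by $\rho(g)$ (for $g$ a lift of a topological generator of $\Gamma$) is an automorphism of the central simple $k$-algebra $D$ whose fixed set is exactly $\End_G(V)=k$; being inner by some $z\in D^{*}$, and $z$ lying in its own fixed set, this automorphism is trivial, forcing $D=k$.

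Your $m'=1$ step is correct (beware the notational clash: your stabilizer ``$G_1$'' is not the paper's $G_1=\pi^{-1}(\Gamma)=G$). The $n'=1$ step has a gap: the assertion that $\tilde\Pi$ is ``topologically generated'' by $\bar k^{*}$ and a single lift $\tilde\gamma$, ``hence abelian'', is not justified. There is no relevant topology on $\GL_{n'}(\bar k)$ here, and the image of $G_d/H$ in $\PGL(U')$ need not coincide with the abstract group generated by the image of one topological generator (for $k=\Q_{\ell}$ it can be an uncountable procyclic group). So you have not shown that an eigenline of $\tilde\gamma$ is stable under all of $\tilde\Pi$, which is what the correspondence with $G_d$-submodules requires. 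The fix is to bypass abelianness: take an eigenline $L\subset U'$ of $\tilde\gamma$ and observe that $\{g\in G_d:\rho(g)(W\otimes L)=W\otimes L\}$ is \emph{closed} in $G_d$ (the stabilizer in $\GL_k(V)$ of any $\bar k$-subspace of $V\otimes_k\bar k$ is cut out by $k$-polynomial equations, hence closed in the $k$-topology), contains $H$ and a lift of the generator $\gamma$, and therefore equals $G_d$. This is exactly the continuity input the paper uses when it identifies the fixed set of conjugation by a single $\rho(g)$ with $\End_G(V)=k$.
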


\begin{rem}
Lemma \ref{absimple} asserts that if $W$ is an absolutely simple
$G_d$-module then it remains absolutely simple, being viewed as a
$H$-module.
\end{rem}

\begin{proof}
We have
$$k\subset \End_{G}(V)\subset \End_{G_d}(V)\subset \End_{H}(V).$$
Since $\End_{G_d}(V)=k$, we conclude that $k= \End_{G}(V)$.

By Clifford's Lemma \cite[Theorem (49.2)]{CR}, the $H$-module $V$ is
semisimple. Let us split $V$ into a direct sum $V=\oplus_{i=1}^r
V_i$ of isotypic $H$-modules. Clearly $G$ permutes $V_i$'s; the
simplicity of the $G$-module $V$ implies that $G$ acts on $\{V_1,
\dots , V_r\}$ transitively. In particular, all $V_i$'s have the
same dimension and therefore
$$\dim(V_i)=\frac{\dim(V)}{r}=\frac{d}{r};$$
in particular, $r\mid d$. Clearly, the action of $G$ on $\{V_1,
\dots , V_r\}$ factors through $G/H$. Since this action is
transitive and  $G/H$ is procyclic, this action factors through
finite cyclic $G/G_r$ and therefore through $G/G_d$, i.e,, each
$V_i$ is a $G_d$-submodule. Since the $G_n$-module $V$ is
(absolutely) simple, $V=V_i$. In other words, the $H$-module $V$ is
isotypic.Then the centralizer
$$D=\End_H(V)$$
is a simple $k$-algebra. Let $k^{\prime}$ be the center of $D$: it
is an overfield of $k$. Clearly, $V$ becomes a $k^{\prime}$-vector
space; in particular, $k^{\prime}/k$ is a finite algebraic extension
and $[k^{\prime}:k]\mid d$. On the other hand, since $H$ is normal
in $G$,
$$\rho(g) D \rho(g)^{-1}=D \ \forall g\in G.$$
Clearly, the center $k^{\prime}$ is also stable under the
conjugations by elements of $\rho(G)$ and $\{k^{\prime}\}^{G}=k$.
This gives us a continuous group homomorphism $G/H \to
\Aut(k^{\prime}/k)$ such that $\{k^{\prime}\}^{G/H}=k$. It follows
that $k^{\prime}/k$ is a finite cyclic Galois extension and $$G/H
\to \Aut(k^{\prime}/k)=\Gal(k^{\prime}/k)$$ is a surjective
homomorphism. Since $\#(\Gal(k^{\prime}/k))=[k^{\prime}:k]$ divides
$d$, the surjection $\Gal(k^{\prime}/k)\twoheadrightarrow
\Gal(k^{\prime}/k)$ factors through $G/G_d$ and therefore
$$k^{\prime} \subset \End_{G_d}(V);$$
since $\End_{G_d}(V)=k$, we conclude that $k^{\prime}\subset k$ and
therefore  $k^{\prime}= k$. This means that $D$ is a central simple
$k$-algebra and let $t:=\dim_k(D)$. We need to prove that $t=1$.
Suppose that $t>1$, pick a generator in $\Gamma$ and denote by $g$
its preimage in $G$. Then the map
$$u \mapsto \rho(g) u \rho(g)^{-1}$$
is an automorphism of $D$, whose set of fixed points coincides with
$k$. By Skolem-Noether theorem, there exists an element $z \in
D^{*}$ such that
$$\rho(g) u \rho(g)^{-1}=zu z^{-1} \ \forall u \in D.$$
Clearly, $z$ itself is a fixed point of this automorphism and
therefore $z\in k$, which implies that the automorphism is the
identity map and therefore its set of fixed points must be the whole
$D$, which is not the case, because $t>1$. The obtained
contradiction proves that $t=1$, i.e.,
$$\End_H(V)=D=k$$
and we are done.
\end{proof}

\begin{lem}
\label{twist} Let $\rho_1:G\to \Aut_k(W_1)$  be a continuous linear
$d$-dimensional representation of $G$ over $k$. Let $\rho_2:G\to
\Aut_k(W_2)$ be a linear finite-dimensional continuous
representation of $G$ over $k$. Suppose that $\End_H(W_1)=k$ and the
$H$-modules $W_1$ and $W_2$ are isomorphic. Then there exists a
continuous character
$$\chi: G/H=\Gamma \to k^{*}$$
such that the $G$-module $W_2$ is isomorphic to the twist
$W_1(\chi)$. In particular, the one-dimensional $G$-modules
$\det(W_2)$ and $[\det(W_1)](\chi^d)$ are isomorphic.
\end{lem}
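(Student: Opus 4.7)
The plan is to construct the character $\chi$ by measuring the failure of a chosen $H$-module isomorphism to be $G$-equivariant. Since $W_1\cong W_2$ as $H$-modules, in particular $\dim_k W_2=d$; I would fix an $H$-equivariant isomorphism $\phi:W_1\to W_2$ and, for each $g\in G$, set
$$\chi(g):=\phi^{-1}\circ\rho_2(g)\circ\phi\circ\rho_1(g)^{-1}\in\Aut_k(W_1).$$
The target is to show that $\chi(g)$ is actually a scalar in $k^*$, that $g\mapsto\chi(g)$ is a continuous character of $G$ that kills $H$, and that with this $\chi$ the map $\phi$ promotes to a $G$-module isomorphism $W_1(\chi)\to W_2$.

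First I would verify that $\chi(g)$ commutes with $\rho_1(h)$ for every $h\in H$. Using the $H$-equivariance $\phi\rho_1(h)=\rho_2(h)\phi$ and rewriting $g^{-1}h$ as $(g^{-1}hg)g^{-1}$ with $g^{-1}hg\in H$ --- this is where the normality of $H$ in $G$ enters --- a direct calculation reduces both $\chi(g)\rho_1(h)$ and $\rho_1(h)\chi(g)$ to the common expression $\phi^{-1}\rho_2(hg)\phi\,\rho_1(g)^{-1}$. Hence $\chi(g)\in\End_H(W_1)$; invoking the hypothesis $\End_H(W_1)=k$ gives $\chi(g)\in k^*$ (invertibility being automatic from the definition).

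Next I would check the remaining formal properties. Multiplicativity $\chi(g_1g_2)=\chi(g_1)\chi(g_2)$ is immediate once one rewrites the defining identity as $\phi^{-1}\rho_2(g)\phi=\chi(g)\rho_1(g)$ and uses that the scalar $\chi(g)$ commutes with everything; continuity is inherited from that of $\rho_1$, $\rho_2$, and $\phi$; and $\chi(h)=\rho_1(h)\rho_1(h)^{-1}=\I$ for $h\in H$, so $\chi$ factors through $\Gamma=G/H$. Rearranging the same identity as $\rho_2(g)=\phi\,(\chi(g)\rho_1(g))\,\phi^{-1}$ displays $\phi$ as a $G$-module isomorphism $W_1(\chi)\to W_2$, and the determinant assertion then follows from the standard identity $\det(V(\chi))\cong\det(V)\otimes\chi^{\dim V}$ applied to $V=W_1$.

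The only place where anything substantive happens is the first step --- checking that $\chi(g)$ centralizes the $H$-action. That single verification is where the normality of $H$ in $G$, the $H$-equivariance of $\phi$, and the hypothesis $\End_H(W_1)=k$ are all used simultaneously; everything after it is formal bookkeeping.
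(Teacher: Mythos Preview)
Your proof is correct and is essentially the same argument as the paper's. The only difference is presentational: the paper observes that $\Hom_H(W_1,W_2)$ is a one-dimensional $G$-stable subspace of $\Hom_k(W_1,W_2)$ (under the action $g\cdot u=\rho_2(g)\,u\,\rho_1(g)^{-1}$), so $G$ acts on it through a character $\chi$ trivial on $H$; your direct verification that $\chi(g)=\phi^{-1}\rho_2(g)\phi\,\rho_1(g)^{-1}$ lies in $\End_H(W_1)=k$ is exactly the computation underlying that $G$-stability, unpacked by hand.
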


\begin{proof}
It is well known that the vector space $\Hom_k(W_1,W_2)$ carries the
natural structure of a $G$-module defined by
$$g: u \mapsto \rho_2(g)u\rho_1(g)^{-1} \ \forall g\in G, \ u \in \Hom_k(W_1,W_2).$$
Since $H$ is normal in $G$, the subspace $\Hom_H(W_1,W_2)$ of
$H$-invariants is a $G$-invariant subspace in $\Hom_k(W_1,W_2)$. Our
conditions on the $H$-module $W_1$ and $W_2$ imply that the
$k$-vector space $\Hom_H(W_1,W_2)$ is one-dimensional (and its every
nonzero element $W_1\to W_2$ is an isomorphism of $H$-modules).
Therefore the action of $G$ on one-dimensional $\Hom_H(W_1,W_2)$ is
defined by a certain continuous character $\chi: G \to k^{*}$, which
obviously kills $H$, so we may view $\chi$ as a continuous character
$$\Gamma=G/H \to k^{*}.$$
This means that if $u: W_1 \cong W_2$ is an isomorphism of
$H$-modules then
$$\rho_2(g)u\rho_1(g)^{-1}=\chi(g) u \ \forall g\in G.$$
Multiplying this equality from the right by $\rho_1(g)$, we obtain
that
$$\rho_2(g)u=\chi(g) u \rho_1(g)= u [\chi(g)\rho_1(g)]\ \forall g\in G,$$
which means that $u$ is an isomorphism of $G$-modules $W_1(\chi)$
and $W_2$. It remains to notice that
$\det(W_1(\chi))=[\det(W_1)](\chi^d)$.
\end{proof}

\begin{cor}
We keep the notation and assumptions of Lemma \ref{twist}. If for
some positive integer $N$ the $G$-modules $[\det(W_1)]^{\otimes N}$
and $[\det(W_2)]^{\otimes N}$ are isomorphic then the character
$\chi^{Nd}$ is trivial.
\end{cor}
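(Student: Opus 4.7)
The plan is to read off the conclusion of Lemma \ref{twist} at the level of determinants and then exponentiate. Concretely, Lemma \ref{twist} already produces an isomorphism of one-dimensional $G$-modules
$$\det(W_2)\cong [\det(W_1)](\chi^d),$$
and both sides are just continuous characters $G\to k^{*}$. So the entire content of the corollary is the observation that tensor products of one-dimensional representations multiply their defining characters.

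First I would raise the isomorphism above to the $N$-th tensor power, obtaining
$$[\det(W_2)]^{\otimes N}\cong [\det(W_1)]^{\otimes N}\otimes \chi^{Nd}$$
as one-dimensional $G$-modules, i.e.\ the character defining the left-hand side equals the character defining $[\det(W_1)]^{\otimes N}$ multiplied by $\chi^{Nd}$. Next I would invoke the hypothesis: the $G$-modules $[\det(W_1)]^{\otimes N}$ and $[\det(W_2)]^{\otimes N}$ are isomorphic, so their defining characters coincide. Cancelling the common character (valid because we are working with values in the group $k^{*}$) leaves $\chi^{Nd}=1$.

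There is no real obstacle here; the statement is essentially a bookkeeping consequence of Lemma \ref{twist}. The only thing worth flagging is that $\chi$ factors through $\Gamma=G/H$, so $\chi^{Nd}$ being trivial should be understood as a statement about characters of $\Gamma$. (This is compatible with Remark \ref{finiteorder}, which will later let us deduce that such a $\chi$ factors through a finite cyclic quotient of $\Gamma$.)
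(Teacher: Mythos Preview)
Your proposal is correct and matches the paper's treatment: the paper states the corollary without proof, treating it as an immediate consequence of the determinant identity $\det(W_2)\cong[\det(W_1)](\chi^d)$ from Lemma~\ref{twist}, which is exactly the computation you carry out.
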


\begin{thm}
\label{centerDel} Suppose that the $G$-module $V$ is semisimple.
Then there exists a positive integer $n$ that  depends only on $d$
and such that the center of $\End_H(V)$ lies in $\End_{G_n}(V)$.
\end{thm}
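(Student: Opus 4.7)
The plan is to analyze the $G$-action by conjugation on the center $Z$ of $\End_H(V)$, combining a Clifford-type decomposition of $V|_H$ with the procyclicity of $\Gamma = G/H$ to force the kernel of this action to contain some $G_n$ with $n = n(d)$.

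I would first decompose the semisimple $H$-module $V$ into $H$-isotypic components $V = \bigoplus_{j=1}^{s} V_j$ with $V_j \cong U_j^{n_j}$, the $U_j$ pairwise non-isomorphic simple $H$-modules, and set $D_j = \End_H(U_j)$, letting $k_j$ be the center of $D_j$. Then
$$\End_H(V) = \prod_{j=1}^{s} M_{n_j}(D_j), \qquad Z = \prod_{j=1}^{s} k_j.$$
Since each $U_j$ embeds into $V$, Schur's lemma yields $\dim_k U_j \le d$, and the standard inequality $[k_j:k] \le \dim_k D_j \le \dim_k U_j$ gives $[k_j:k] \le d$; clearly $s \le d$ as well.

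Next, because $H$ is normal in $G$, conjugation by $\rho(g)$ preserves $H$-submodules, so $G$ permutes $\{V_1,\dots,V_s\}$ through an action that factors through $\Gamma$. Procyclicity of $\Gamma$ forces the stabilizer of any $j_0$ to be the unique open subgroup of its index $r_{j_0} \le d$, namely $r_{j_0}\Gamma$. The preimage $G_{r_{j_0}}$ then stabilizes $V_{j_0}$ and acts on the factor $k_{j_0}$ by $k$-algebra automorphisms; this action kills $H$ (since $k_{j_0}\subset \End_H(V)$ is already centralized by $\rho(H)$), hence factors through the procyclic quotient $G_{r_{j_0}}/H = r_{j_0}\Gamma$, and its image in $\Aut_k(k_{j_0})$ is finite of some order $t_{j_0}$ dividing $[k_{j_0}:k] \le d$. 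By the argument of Remark \ref{finiteorder} (a continuous homomorphism of order dividing $t_{j_0}$ from the procyclic group $r_{j_0}\Gamma$ must kill $t_{j_0}\cdot r_{j_0}\Gamma$), the kernel of this action on $k_{j_0}$ contains $(G_{r_{j_0}})_{t_{j_0}} = G_{r_{j_0} t_{j_0}}$.

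Setting $n := \mathrm{lcm}(1, 2, \dots, d^{2})$, we have $r_{j_0} t_{j_0} \mid n$ for every orbit, so $G_n$ simultaneously stabilizes every $V_j$ and fixes every $k_j$ pointwise. Thus $G_n$ acts trivially on $Z$, which gives $Z \subset \End_{G_n}(V)$ with $n$ depending only on $d$. The main obstacle is disentangling the two pieces of the $\Gamma$-action on $Z$, namely the permutation of isotypic components and the field-automorphism action on each $k_j$, and checking that each is controlled by the filtration $\{G_m\}$ with index bounded by $d$; procyclicity of $\Gamma$ together with the identity $(G_r)_t = G_{rt}$ recorded in the preamble makes this bookkeeping routine.
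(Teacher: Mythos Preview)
Your argument is correct and follows the same strategy as the paper: invoke a Clifford-type result to get $V|_H$ semisimple, identify the center $Z=\prod_j k_j$ with each $[k_j:k]\le d$ and at most $d$ factors, and then use that the conjugation action $G\to\Aut_k(Z)$ kills $H$ and hence factors through the procyclic $\Gamma$ with image of order bounded purely in terms of $d$. The only difference is bookkeeping---the paper bounds $|\Aut_k(Z)|\le d!\cdot d^d$ globally and sets $n=(d!\cdot d^d)!$, whereas your orbit-by-orbit analysis (first stabilize each isotypic block, then kill the residual field automorphisms) yields the sharper $n=\mathrm{lcm}(1,\dots,d^2)$; one minor imprecision is that $t_{j_0}$ need only satisfy $t_{j_0}\le [k_{j_0}:k]$ rather than divide it, but since you only use $r_{j_0}t_{j_0}\le d^2$ this does not affect the argument.
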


\begin{proof}
By a variant of Clifford's Lemma \cite[Lemma 3.4]{ZarhinMatSb}, the
$H$-module $V$ is semisimple. In particular, the centralizer
$D=\End_H(V)$ is a (finite-dimensional) semisimple $k$-algebra.
Since $H$ is normal in $G$
$$\rho(g)D \rho(g)^{-1}=D \ \forall g\in G.$$
Let $Z$ be the center of $D$. Since $D$ is semisimple, $Z$ is
isomorphic to a direct sum $\oplus_{i=1}^r k_i$ of finitely many
overfields $k_i\supset k$ where each $k_i/k$ is a finite algebraic
field extension. Clearly,
 $$[k_i:k]\le \dim_k(Z)\le \dim_k(V)=d, \ r\le d$$ and the
$k$-algebra $Z$ has exactly $r$ minimal idempotents (the identity
elements $e_i$'s of $k_i$'s. Clearly, group $\Aut_k(Z)$ of
$k$-linear automorphisms of $Z$ permutes $e_i$'s, which gives us the
homomorphism from $\Aut_k(Z)$ to the full symmetric group $\ST_r$,
whose kernel leaves invariant each summand $k_i$ and therefore sits
in the product $\prod_{i=1}^r \Aut(k_i/k)$, whose order does not
exceed  $\prod_{i=1}^r [k_i:k]\le d^d$. It follows that $\Aut_k(Z)$
is a finite group, whose order  does not exceed $d!\cdot d^d$. This
implies that $n:=(d! \cdot d^d)!$  is divisible by the order of
$\Aut_k(Z)$.

 On the
other hand, clearly,
$$\rho(g)Z \rho(g)^{-1}=Z \ \forall g\in G,$$
because every automorphism of $D$ respects its center. This gives us
the group homomorphism
$$\phi: G \to \Aut_k(Z), \ \phi(g)(z)=\rho(g)z \rho(g)^{-1} \ \forall
z\in Z, \ g\in G,$$ which kills $H$, because $$Z\subset
D=\End_H(V).$$ Clearly, $\phi$ kills $G_n$ and we are done.
\end{proof}

\section{Proofs of main results}
\label{proofM}  There is a subfield $K\subset L$ such that $K$ is
finitely generated over $\F=\F_p$ and the compositum $K\bar{\F}=L$
while given abelian varieties $X$ and $Y$, their group laws and
zeros are defined over $K$. We also require that
$$\End_K(X)=\End_L(X), \ \End_K(Y)=\End_L(Y)\eqno(4).$$
Let us put
$$G=\Gal(K), H=\Gal(L), \Gamma=\Gal(L/K).$$
Since $\bar{\F}/\F_p$ is a Galois extension and
  $K\bar{\F}=L$, the Galois group $\Gamma=\Gal(L/K)$ is
canonically isomorphic to a closed subgroup of
$\Gal(\bar{\F}/\F_p)$; since the latter is procyclic, $\Gamma$ is
also procyclic.

Let $n$ be a positive integer and let us consider the open normal
subgroup $G_{n}$ of $G$. Since $G_n$ contains $H$, the subfield
$K_{n}={\bar{K}_s}^{G_{n}}$ of $G_{n}$-invariants is a finite
(cyclic) Galois extension of $K$ that lies in $L$. In particular,
$K_{n}$ is finitely generated over $\F_p$ and $\Gal(K_{n})=G_{n}$.
Since $K\subset K_n \subset L$, it follows from (4) that
$$\End_{K_n}(X)=\End_L(X), \ \End_{K_n}(Y)=\End_L(Y)\eqno(5).$$
 If $\ell$ is a prime different from $p$ we write
$$\bar{\chi}_{\ell}:\Gal(K)\to (\Z/\ell\Z)^{*}=\F_{\ell}^{*}, \ {\chi}_{\ell}:\Gal(K)\to
\Z_{\ell}^{*}\subset \Q_{\ell}^{*}$$ for the cyclotomic characters
that define the Galois action on all $\ell$th roots of unity (resp.
all $\ell$-power roots of unity). Clearly,
$$\bar{\chi}_{\ell}=\chi_{\ell}\bmod\ell \eqno(6).$$

Since $K$ is finitely generated over $\F_p$, the cyclotomic
characters enjoy the following properties:

\begin{itemize}
\item[(i)]
The character ${\chi}_{\ell}$ has infinite multiplicative order.
\item[(ii)]
If $N$ is a positive integer then for all but finitely many primes
$\ell$ the character $\bar{\chi}_{\ell}^N$ is nontrivial.
\end{itemize}

Since every $K_n$ is finitely generated over $\F_p$, the abelian
variety $X$ over $K$ enjoys the following properties.

\begin{itemize}
\item[(a)]
For all primes $\ell \ne \fchar(K)$ the $G_n$-module $V_{\ell}(X)$
is semisimple and
$$\End_{G_n}(V_{\ell}(X))=\End_{K_n}(X)\otimes\Q_{\ell}=
\End_L(X)\otimes\Q_{\ell}.$$ In particular, if $\End_L(X)=\Z$ then
$G_n$-module $V_{\ell}(X)$ is absolutely simple.
\item[(b)]
For all but finitely many primes $\ell$ the $G_n$-module $X_{\ell}$
is semisimple and
$$\End_{G_n}(X_{\ell})=\End_{K_n}(X)\otimes\Z/\ell=
\End_L(X)\otimes\Z/\ell.$$ In particular, if $\End_L(X)=\Z$ then
$G_n$-module $X_{\ell}$ is absolutely simple for all but finitely
many primes $\ell$.

\end{itemize}

\begin{proof}[Proof of Theorem \ref{main}]
Let $d=\dim(X)$. Let us consider the open normal subgroup $G_{2d}$
of $G$.

Since $\End_L(X)=\Z$, (a) tells us that  the $G_{2d}$-module
$V_{\ell}(X)$ is absolutely simple for each $\ell \ne p$; in
particular,
$$\Q_{\ell}=\End_{G_{2d}}(V_{\ell}(X))=\End_{G}(V_{\ell}(X)).$$
On the other hand, (b) tells us that   for all but finitely many
$\ell$ the $G_{2d}$-module $X_{\ell}$ is absolutely simple; in
particular,
$$\F_{\ell}=\End_{G_{2d}}(X_{\ell})=\End_{G}(X_{\ell}).$$

Now, in order to finish the proof of Theorem \ref{main} it suffices
to apply Lemma \ref{absimple} in the following situations (taking
into account that
$2d=\dim_{\Q_{\ell}}(V_{\ell}(X))=\dim_{\F_{\ell}}(X_{\ell})$).

\begin{itemize}
\item[(i)]
$k=\Q_{\ell}, \ V=V_{\ell}(X)$.
\item[(ii)]
$k=\F_{\ell}, \ V=X_{\ell}$.
\end{itemize}
\end{proof}

\begin{proof}[Proof of Theorem \ref{isogeny}]
Clearly, $d:=\dim(X)=\dim(Y)$. It is well known that the existence
of Galois-equivariant nondegenerate alternating bilinear
(Weil--Riemann) forms on Tate modules \cite[ Sect. 1.3]{SerreIzv},
\cite[Proof of Prop. 2.2]{ZarhinMatSb} implies that
 $\det(V_{\ell}(X))$ and
$\det(V_{\ell}(Y))$ are one-dimensional $G$-modules defined by the
character $\chi_{\ell}^d$. Now applying Lemma \ref{twist}, we
conclude the $G$-module $V_{\ell}(Y)$ is isomorphic to the twist
$V_{\ell}(X)(\chi)$ for a certain continuous character
$\chi:G/H=\Gamma\to \Q_{\ell}^{*}$. It follows from Corollary to
Lemma \ref{twist} that $\chi^{2d}$ is trivial. This implies that
$\chi$ kills $G_{2d}$ and therefore the $G_{2d}$-modules
$V_{\ell}(X)$ and $V_{\ell}(Y)$ are isomorphic. Now the isogeny
theorem over $K_{2d}$ implies that $X$ and $Y$ are isogenous over
$K_{2d}$ and therefore over $L$. This proves (i).

Similar arguments work in the case (ii). Clearly,
$d:=\dim(X)=\dim(Y)$ and the structure of $\Gal(K)$-modules on the
rank $1$ free $\Z_{\ell}$-modules
$\Lambda^{2d}_{\Z_{\ell}}T_{\ell}(X)$ and
$\Lambda^{2d}_{\Z_{\ell}}T_{\ell}(Y)$ is defined by $\chi_{\ell}^d$,
because
$$\Lambda^{2d}_{\Z_{\ell}}T_{\ell}(X)\subset
\Lambda^{2d}_{\Q_{\ell}}V_{\ell}(X)=\det(V_{\ell}(X)), \
\Lambda^{2d}_{\Z_{\ell}}T_{\ell}(Y)\subset
\Lambda^{2d}_{\Q_{\ell}}V_{\ell}(Y)=\det(V_{\ell}(Y)).$$ It follows
from (0) that
$$\det(X_{\ell})=\Lambda^{2d}_{\Z_{\ell}}X_{\ell}=\Lambda^{2d}_{\Z_{\ell}}(T_{\ell}(X)/\ell)=[\Lambda^{2d}_{\Z_{\ell}}(T_{\ell}(X)]/\ell
$$ and therefore the structure of
the Galois module on $\det(X_{\ell})$ is defined by the character
$\chi_{\ell}^d\bmod \ell=\bar{\chi}_{\ell}^d$. By the same token,
the structure of the Galois module on the one-dimensional
$\det(Y_{\ell})$ is also defined by $\bar{\chi}_{\ell}^d$. Now
applying Lemma \ref{twist}, we conclude the $G$-module $Y_{\ell}$ is
isomorphic to the twist $Y_{\ell}(\bar{\chi})$ for a certain
continuous character $\bar{\chi}:G/H=\Gamma\to \F_{\ell}^{*}$. It
follows from Corollary to Lemma \ref{twist} that $\bar{\chi}^{2d}$
is trivial. As above, this implies that $\bar{\chi}$ kills $G_{2d}$
and therefore the $G_{2d}$-modules $X_{\ell}$ and $Y_{\ell}$ are
isomorphic for infinitely many $\ell$. Now Theorem \ref{isogfinite}
implies that $X$ and $Y$ are isogenous over $K_{2d}$ and therefore
over $L$. This proves (ii).
\end{proof}

\begin{proof}[Proof of Theorem \ref{center}]
As above, $G=\Gal(K), H=\Gal(L)$.

 (i) Let us put $k=\Q_{\ell}, V=V_{\ell}(X)$ and apply
Theorem \ref{centerDel}. We obtain that there exists a positive
integer $n$ such that the center of $\End_{\Gal(L)}(V_{\ell}(X))$
lies in $\End_{G_n}(V_{\ell}(X))\otimes\Q_{\ell}$. By (a),
$$\End_{G_n}(V_{\ell}(X))=\End_{K_n}(X)\otimes\Q_{\ell}=
\End_{L}(X)\otimes\Q_{\ell}$$ and we are done.

(ii) Let us put $k=\F_{\ell}, V=X_{\ell}$ and apply Theorem
\ref{centerDel}. We obtain that there exists a universal positive
integer $n$ that depends only on $2\dim(X)$ such that for all but
finitely many primes $\ell$ the center of $\End_{\Gal(L)}(X_{\ell})$
lies in $\End_{G_n}(X_{\ell})$. By (b),
$$\End_{G_n}(X_{\ell})=\End_{K_n}(X)\otimes\Z/\ell=
\End_{L}(X)\otimes\Z/{\ell}$$ and we are done, taking into account
Remark \ref{centerendo}.
\end{proof}

\begin{proof}[Proof of Theorem \ref{ssL}]
Recall that  $H=\Gal(L)$ is a {\sl normal} subgroup in $G=\Gal(K)$.
By the variant of Clifford's lemma \cite[Lemma 3.4]{ZarhinMatSb},
the semisimplicity of the $\Gal(K)$-modules $V_{\ell}(X)$ and
$X_{\ell}$ implies that they are semisimple $\Gal(L)$-modules.
\end{proof}

\end{document}